\newtheorem{theorem}{Theorem}[section]
\newtheorem{lemma}[theorem]{Lemma}
\newtheorem{corollary}[theorem]{Corollary}
\theoremstyle{definition}
\newtheorem{definition}[theorem]{Definition}
\author{Radu-B. Munteanu}
\address{
University of Bucharest\\
Bd. Regina Elisabeta 4-12 Sector 1\\ Bucharest, Romania}
\email{radu-bogdan.munteanu@g.unibuc.ro}
\thanks{This work was done while the author was Ph.D. student at the University of Ottawa}
\begin{document}
\title[A non-product type transformation satisfying property A]{A non-product type non-singular transformation which satisfies Krieger's property A}
\begin{abstract}
We show that there exists an ergodic non-singular transformation
which satisfies Krieger's property A, but which is not of product
type.
\end{abstract}
\maketitle

\section{Introduction}
Property A was introduced by Krieger \cite{K2} in order to show that
there exist ergodic non-singular transformations which are not orbit
equivalent to any product odometer. A non-singular transformation
that is equivalent to a product odometer is said to be of product
type. Krieger \cite{K2}, \cite{K3} showed that property A is an
invariant for orbit equivalence and that any product odometer of
type $III$ satisfies this property. He also constructed an ergodic
non-singular transformation of type $III$ that does not satisfies
this property and therefore it is not of product type. We recall
that non-singular transformations of type $III_{\lambda}$,
$\lambda\neq 0$, are unique up to orbit equivalence and they are of
product type.

In order to characterize the ITPFI factors among AFD factors of type
$III_{0}$, Connes and Woods [CW] introduced a property of ergodic
flows called approximate transitivity, or shortly, AT. They proved
that, up to isomorphism, an AFD factor of type $III_{0}$ is ITPFI if
and only its flow of weights is aperiodic and approximately
transitive. As any ITPFI factor is the Krieger factor of a product
odometer, and there is a bijective correspondence between the
isomorphism classes of AFD factors of type $III_{0}$ and the orbit
equivalence classes of ergodic non-singular transformations of type
$III_{0}$, their result says that an ergodic non-singular
transformation of type $III_{0}$ is of product type if and only if
its associated flow is aperiodic and AT.

In this paper we show that there exist an ergodic non-singular
transformation of type $III_{0}$ and which is not of product type
but satisfies property A. In this way, we answer a question of
Dooley and Hamachi \cite{DH}. In Section 2, we consider the AFD
factor that is not an ITPFI factor, studied by Giordano and
Handelman \cite{GH}. This factor can be realized as the factor
associated to a countable measured ergodic equivalence relation.  We
find an explicit non-singular transformation $T$ of type $III_{0}$
which determines this equivalence relation up to orbit equivalence,
and which, by \cite{GH}, is not of product type. In Section 3, we
show that $T$ does not satisfy Krieger's property A.

\section{Prelimnaries}
Let $(X,\mathfrak{B},\mu)$ be a Lebesgue space. A one to one
bi-measurable mapping $T:X\rightarrow X$ is non-singular if it
preserves null sets. Furthermore, $T$ is ergodic if $T^{-1}(A)=A$
implies either $\mu(A)=0$ or $\mu(X\setminus A)=0$. For a
non-singular transformation $T$, we denote by $[T]$ the full group
of $T$, that is the group of non-singular transformations $S$
defined by measurable integer valued functions $n(x)$ as
$S(x)=T^{n(x)}$. We recall that $T$ is said to be of type $III$ if
there is no $\sigma$-finite $T$-invariant measure $\nu$ equivalent
to $\mu$. According to Krieger's ratio set, the non-singular
transformations of type $III$ are classified in subtypes
$III_{\lambda}$, $0\leqslant\lambda\leqslant 1$.

If $T'$ is another non-singular transformation on a Lebesgue space
$(X',\mathfrak{B}',\mu')$, we say that $T$ and $T'$ are orbit
equivalent if there exists a bi-measurable one to one mapping $S$
from $X$ onto $X'$ that carries $\mu$-null sets onto $\mu'$-null
sets and $\{S(T^{n}(x)); n\in\mathbb{Z}\}=\{T'^{n}(S(x)),
n\in\mathbb{Z}\}$ for $\mu$-almost all $x\in X$.

For a sequence of positive integers $(k_{n})_{n\geqslant 1}$, we
consider $X=\prod_{n\geqslant1}\{0,1,\ldots k_{n}-1\}$ endowed with
the product topology and the corresponding Borel structure. We
define $T$ on $X$ by
\begin{equation*}T(x)_{n}=\left\{\begin{array}{c}
                              0 \ \ \ \ \ \ \text { if }n<N(x) \\
                              x_{n}+1\text{ if }n=N(x) \\
                              x_{n} \ \ \ \ \ \text{ if }n> N(x).
\end{array}
\right.
\end{equation*}
where $N(x)=\min\{n\geqslant 1: x_{n}< k_{n}-1 \}$. Note that $T$ is
a non-singular and ergodic transformation. Such a transformation is
called a product odometer on $X$.

A non-singular transformation is called of product type if it is
orbit equivalent to a product odometer. All product type
non-singular transformation satisfies Krieger's Property A, which is
defined as follows.

\begin{definition} \cite{K2} A non-singular transformation $T$ on
$(X,\mathfrak{B},\mu)$ is said to satisfy Property A if there exist
constants $\eta, \delta > 0$ and a $\sigma$-finite measure
$\nu\sim\mu$ such that every set $A$ of positive measure contains a
measurable subset $B$ of positive measure and
\begin{align*}
\limsup_{s\rightarrow\infty}\nu(K_{\nu,T}(B, s,\zeta))>\eta\mu(B),
\end{align*}where
\begin{align*}
K_{\nu,T}(B, s,\zeta)&=\{x\in B, \exists \ \gamma\in [T], \
\gamma x\in B\text{ and }\\
&\log\frac{d\mu\circ\gamma}{d\mu}(x)\in
(e^{s-\delta},e^{s+\delta})\cup(-e^{s+\delta}, -e^{s-\delta})\}
\end{align*}
\end{definition}

In a natural way, a non-singular transformation $T$ on
$(X,\mathfrak{B},\mu)$, can induce an equivalence relation on $X$.
This equivalence relation will be denoted $\mathcal{R}_{T}$ and will
be given by
\[(x,y)\in\mathcal{R}_{T}\text{ if and only if }y=T^{n}x\text{ for some }n\in\mathbb{Z}\]
We immediately observe that $\mathcal{R}_{T}$ is a measurable subset
of $X\times X$, for any $x\in X$ the orbits
$\mathcal{R}_{T}(x)=\{y\in X; (x,y)\in \mathcal{R}_{T}\}$ are
countable and $\mathcal{R}_{T}$ is non-singular in the sense that
the saturation of any set of measure zero has measure zero.

An equivalence relation $\mathcal{R}$ on the Lebesgue space
$(X,\mathfrak{B},\mu)$ with these properties is called countable
measured equivalence relation, \cite{FM1}. $\mathcal{R}$ is ergodic
if for every $\mathcal{R}$-invariant set, $A\in\mathfrak{B}$
$\mu(A)=0$ or $\mu(X\setminus A)=0$. Clearly, $T$ is ergodic if and
only if $\mathcal{R}_{T}$ is ergodic. Two equivalence relations
$\mathcal{R}$ and $\mathcal{R}'$ on $(X,\mathfrak{B},\mu)$ resp. on
$(X',\mathfrak{B}',\mu')$ are called orbit equivalent if there
exists a bi-measurable one to one mapping $S$ from $X$ onto $X'$
that carries $\mu$-null sets onto $\mu'$-null such
that\[T(\mathcal{S}(x))=\mathcal{R}'(Sx)\text{ for $\mu$-almost all
}x\in X.\]Note that two non-singular transformations $T$ and $T'$ on
$(X,\mathfrak{B},\mu)$ and $(X',\mathfrak{B}',\mu')$ respectively,
are orbit equivalent if and only if $\mathcal{R}_{T}$ and
$\mathcal{R}_{T'}$ are orbit equivalent.

We denote by $M(X,\mathcal{R},\mu)$ the von Neumann algebra
associated to a countable measured equivalence relation
$\mathcal{R}$ as introduced by Feldman and Moore \cite{FM2}. This
von Neumann algebra is factor if and only if $\mathcal{R}$ is
ergodic. For $T$ an ergodic non-singular transformation on
$(X,\mathfrak{B},\mu)$, $M(X,\mathcal{R}_{T},\mu)$ is called a
Krieger factor.

\section{Construction of a non-singular transformation T}
\begin{definition} (1) A Bratteli diagram $D=(V,E)$ is a graph with
a set of vertices $V$ and a set of edges $E$, with the following
properties:
\begin{enumerate}
\item[(i)] $V$ is the disjoint union of finite subsets $V_{n}$, $n \geqslant
0$;
\item[(ii)] $E$ is the disjoint union of subsets $E_{n}$, $n\geqslant 1$, with each edge $e\in E_{n}$
connecting a vertex $s(e)\in V_{n}$ with a vertex $r(e)\in V_{n+1}$;
\item[(iii)] For every vertex $v\in V$, there exist $e\in E$ with $s(e)=v$;
\item[(iv)] For every vertex $v\in V$, except for $v\in  V_{0}$,  there exist $e \in E$ with $r(e)=v$.

\end{enumerate}
For simplicity, we assume that $V_{0}$ consists of a single vertex
$v_{0}$.

A path in $D$ is defined as a sequence $(e_{k})$ of edges with
$s(e_{1})\in V_{0}$, and, for $k\geqslant 2$,
 $s(e_{k}) = r(e_{k-1})$.
We denote by $\Omega_{n}$ the space of paths of length $n$, and by
$\Omega$
 the space of paths of infinite length. To each path of length $n$, $f=(f_{1},f_{2},\ldots
 f_{n})$, we associate the set\[Z_{f}=\{e\in\Omega, e_{k}=f_{k}, 1\leqslant k\leqslant
 n\}.\]Such a set is called cylinder of length $n$. On $\Omega$ we consider the $\sigma$-algebra generated by all cylinder sets.

An AF-measure $\mu_{p}$ on $\Omega$ is a measure determined by a
system of transition probabilities p (i.e. maps $p : E \rightarrow
[0, 1]$ with $p(e)>0$ and $\sum_{\{e\in E, s(e)=v\}}p(e)=1$ for
every $v\in V$) given by
\[\mu_{p}(f)=\prod_{k=1}^{n}p(f_{k}),\]
where $f=(f_{1},f_{2},\ldots f_{n})$ is a cylinder of length $n$.

The tail equivalence relation on $\Omega$, denoted by
$\mathcal{R}_{\Omega}$, is defined defined by
\[e\mathcal{R}_{\Omega}f\text{ if and only if for some }n,\ e_{k} = f_{k}\text{ for all }k \geqslant n.\]
\end{definition}

Remark that if $\mu_{p}$ is an AF-measure on the space of infinite
paths of the Bratteli diagram $B=(V,E)$, the tail equivalence
$\mathcal{R}_{\Omega}$ on $\Omega$ is a countable measured
equivalence relation.

For $n\geqslant1$, let $k_{n}=1+5^{n}$ and $\varphi_{n}(\cdot)$ be
the state on the the $k_{n}\times k_{n}$ matrices,
$M_{k_{n}}(\mathbb{C})$, given by
$\varphi_{n}(\cdot)=\text{tr}(h_{n}\cdot)$, where $h_{n}$ is the
diagonal matrix
\[\text{diag}\left(\frac{1}{2},\frac{1}{2\cdot 5^{n}},\frac{1}{2\cdot
5^{n}},\cdots,\frac{1}{2\cdot 5^{n}}\right).\] We consider the
Araki-Woods factor of type $III_{0}$,
$M=\otimes(M_{k_{n}}(\mathbb{C}),\varphi_{n})$. For each $n\geqslant
1$, let $u_{n}=\text{diag}(1,-1,-1, . . . ,-1)$ and let $\alpha=
\otimes\text{Ad }u_{n}$ be the resulting involutive automorphism of
$M$. We denote by $M^{\alpha}$ the fixed point algebra, which is a
subfactor of index $2$ of $M$. In \cite{GH}, it is showed that
$M^{\alpha}$ is not an ITPFI factor.

Note that the fixed point algebra $M^{\alpha}$ is an AFD factor and
$M$ can be written explicitly as the weak closure of an increasing
union of finite dimensional von Neumann algebras $M_{n}$,
$n\geqslant 1$. Let $B = (V,E)$ be the Bratteli diagram
corresponding to $\cup_{n\geqslant 1}M_{n}$. We describe this
Bratteli diagram using a slightly different notation than in
\cite{GH}, which is more convenient in this paper.

We introduce the following notation. For $n\geqslant 1$, let
$X_{n}=\{0,1,\ldots, 5^{n}\}$, $Y_{n}=\{0,1\}$, and $\pi_{n}:
X_{n}\rightarrow Y_{n}$, given by
\[\pi_{n}(0)=0, \ \ \pi_{n}(i)=1\text{ if }i\neq 0.\]
The diagram has a single vertex $v_{0,0}$ at level $0$ and, for
$n\geqslant 1$, there are two vertices at level $n$ indexed with
$v_{n,0}$ (left vertex at level $n$) and $v_{n,1}$ (right vertex at
level $n$). Let $V_{n} = \{v_{n,0}, v_{n,1}\}$, for $n\geqslant 1$.
There is an edge, indexed with $ e_{0,0,1}^{0}$, from $v_{0,0}$ to
the vertex $v_{1,0}$, and $5$ edges from $v_{0,0}$ to $v_{1,1}$,
indexed with $e_{0,1,1}^{1},\ldots e_{0,1,1}^{5}$. For $n\geqslant
2$, there exist an edge indexed with $e_{0,0,n}^{0}$, from
$v_{n-1,0}$ to $v_{n,0}$, an edge indexed with $e_{1,1,n}^{0}$, from
$v_{n-1,1}$ to $v_{n,1}$, $5^{n}$ edges indexed with $e_{0,1,n}^{1},
\ldots e_{0,1,n}^{5^{n}}$, from $v_{n-1,0}$ to $v_{n,1}$, and
$5^{n}$ edges index with $e_{1,0,n}^{1}, \ldots e_{1,0,n}^{5^{n}}$,
from $v_{n-1,1}$ to $v_{n,0}$. Let $E_{n}$ be the set of edges going
the from level $n-1$ to the level $n$. With the above notation,
$E_{1}=\{e_{0,0,n}^{0}, e_{0,1,n}^{1},\ldots e_{0,1,n}^{5}\}$ and,
for $n\geqslant 2$, $E_{n} =\{e_{0,0,n}^{0}, e_{0,1,n}^{1}, \ldots
e_{0,1,n}^{5^{n}},e_{1,1,n}^{0}, e_{1,0,n}^{1}, \ldots
e_{1,0,n}^{5^{n}} \}$. $E_{1}$ is the set of those
$e^{x_{1}}_{0,v_{1},1}$ for which $x_{1}\in\{1,2,\ldots 5\}$ if
$v_{1}=1$ and $x_{1}=0$ if $v_{1}=0$. For $n\geqslant 2$, we can
write $E_{n}$ as the set of all $e^{x_{n}}_{v_{n-1},v_{n},n}$, where
$v_{n},v_{n-1}\in\{0,1\}$ and $x_{n}\in X_{n}$ are given as follows:
$x_{n}\in \{1,2,\ldots, 5^{n}\}$ if $v_{n}\neq v_{n-1}$, and
$x_{n}=0$ if $v_{n}=v_{n-1}$. Equivalently, $E_{1}$ is the set of
all $e^{x_{1}}_{0,v_{1},1}$, where $v_{1}\in\{0,1\}$,
$x_{1}\in\{1,2,\ldots 5\}$ are such that $\pi_{1}(x_{1})=0\text{
(mod 2)}$ and for $n\geqslant 2$, $E_{n}$ is the set of all
$e^{x_{n}}_{v_{n-1},v_{n},n}$, where $v_{n},v_{n-1}\in\{0,1\}$,
$x_{n}\in X_{n}$ are such that $v_{n}-v_{n-1}=\pi_{n}(x_{n})\text{
(mod 2)}$. The paths space of the diagram can be written as the
subset of all elements in $\prod_{n\geqslant 1} E_{n}$ of the form
$(e_{0,v_{1},1}^{x_{1}}, e_{v_{1},v_{2},2}^{x_{2}}, \ldots,
e_{v_{n-1},v_{n},n}^{x_{n}},\ldots)$, where $\pi_{1}(x_{1})=v_{1}$
and $\pi_{n}(x_{n})=v_{n}-v_{n-1}\text {(mod 2)}$ for $n\geqslant
2$. Hence, if $\Omega$ denotes the paths space of the Bratteli
diagram, then
\begin{align*}
\Omega=&\{(e_{0,v_{1},1}^{x_{1}}, e_{v_{1},v_{2},2}^{x_{2}}, \ldots,
e_{v_{n-1},v_{n},n}^{x_{n}},\ldots),\text{ where }x_{n}\in
X_{n}\text{ and }\\
& v_{n}=\pi_{1}(x_{1})+\pi_{2}(x_{2})+\cdots+\pi_{n}(x_{n})\text{
(mod 2)}\text{ for }n\geqslant1\}.
\end{align*}
Let $\mathcal{R}_{\Omega}$ be the tail equivalence relation on
$\Omega$. Two paths in $\Omega$, $(e_{0,v_{1},1}^{x_{1}},
e_{v_{1},v_{2},2}^{x_{2}},\ldots$

\noindent $\ldots, e_{v_{n-1},v_{n},n}^{x_{n}},\ldots)$ and
$(e_{0,u_{1},1}^{y_{1}}, e_{u_{1},u_{2},2}^{y_{2}}, \ldots,
e_{u_{n-1},u_{n},n}^{y_{n}},\ldots)$ are tail equivalent if and only
if
\begin{align*}
\text{for some }n\geqslant 1, \ x_{i}=y_{i}\text{ if }i>n\text{ and\
} \sum_{i=1}^{n}\pi_{i}(x_{i})-\pi_{i}(y_{i})=0\text{ (mod 2)}.
\end{align*}
On E, we define the system $p$ of transition probabilities given by
\[p_{1} : E_{1}\rightarrow [0, 1] \ \  p_{1}(e_{0,0,1}^{0}) =\frac{1}{2},\ \  p_{1}(e_{0,1,1}^{i}) = \frac{1}{2\cdot 5}, \ 1\leqslant i\leqslant 5\] and,
for $n\geqslant 2$,
\[p_{n}: E_{n}\rightarrow [0, 1]\ \  p_{n}(e_{0,0,n}^{0})= p_{n}(e_{1,1,n}^{0}) =\frac{1}{2},\ \
p_{n}(e_{0,1,n}^{i})=p_{n}(e_{1,0,n}^{i})=\frac{1}{2\cdot 5^{n}},\
i\neq 0.\] Let $\mu_{p}$ be the AF-measure on $\Omega_{0}$ defined
by
\[\mu_{p}(f)=p_{1}(e_{0,v_{1},1}^{x_{1}})\cdot p_{2}(e_{v_{1},v_{2},2}^{x_{1}})\cdots p_{n}(e_{v_{n-1}v_{n},n}^{x_{n}}),\]
where $f$ is the cylinder set
$(e_{0,v_{1},1}^{x_{1}},e_{v_{1},v_{2},2}^{x_{1}},\ldots,e_{v_{n-1},v_{n},n}^{x_{n}})$
of length $n$ ($v_{i}\in\{0,1\}$, and $x_{i}\in\{0,1,\ldots,5^{i}\}$
such that $v_{n}-v_{n-1}=\pi_{n}(x_{n})\text{ (mod 2)}$ for
$n\geqslant 1$ and $\pi_{1}(x_{1})=v_{1}$). According to \cite{GH},
we have that $M^{\alpha}\simeq M(\Omega,
\mathcal{R}_{\Omega},\mu_{p})$.

For $n\geqslant 1$, we consider $\mu_{n}$ the measure on $X_{n}$
defined as
\begin{align*}
\mu_{n}(0)=\frac{1}{2}, \ \ \ \ \ \ \ \mu_{n}(i)=\frac{1}{2\cdot
5^{n}}\ \ \ 1\leqslant i \leqslant 5^{n}.
\end{align*}
Let $X=\prod_{n\geqslant 1} X_{n}$ endowed with the product measure
$\mu=\otimes\mu_{n}$ and let $\mathcal{R}$ be the equivalence
relation on $X$, given by:
\begin{align*}
x\mathcal{R} y \text{ if for some  }n\geqslant 1, \
x_{i}=y_{i}\text{ for all }i>n\text{ and
 }\sum\limits_{i=1}^{n}\pi_{i}(x_{i})-\pi_{i}(y_{i})=0\text{ (mod 2)}.
\end{align*}
Further, we define $S: \Omega\rightarrow X$. For
$x=(e_{0,v_{1},1}^{x_{1}}, e_{v_{1},v_{2},2}^{x_{2}}, \ldots,
e_{v_{n-1},v_{n},n}^{x_{n}},\ldots)\in \Omega$, we set
$Sx=(x_{1},x_{2},\ldots,x_{n},\ldots)\in X$. We immediately observe
that $S$ is an orbit equivalence between
$(\Omega,\mu_{p},\mathcal{R}_{\Omega})$ and $(X,\mu,\mathcal{R})$.
Moreover, $\mu\circ S=\mu_{p}$.

Since $\mathcal{R}$ is hyperfinite, a striking result of Connes,
Feldman and Weiss \cite{CFW} says that $\mathcal{R}$ is amenable and
there exists a non-singular transformation $T$ of $X$ such that, up
to a null set, $\mathcal{R}=\mathcal{R}_{T}$. Even though this will
be enough to prove the main result of this paper, we can define
explicitly such a non-singular transformation as follows.

For $\mu$-a.e. $x\in X$, let $N(x)=\min\{i\geqslant 2:
x_{i}<5^{i}\}<\infty$ and $a_{x}\in\{0,1\}$ such that
$a_{x}=\pi_{1}(x_{1})+\pi_{2}(x_{2})+\cdots+\pi_{N(x)}(x_{N(x)})-1\text{
(mod 2)}$. We can define a non-singular transformation $T$ on
$(X,\mu)$ by setting, for almost all $x\in X$,
$$T(x_{1},x_{1},\ldots, x_{n},\ldots)=\left\{
\begin{array}{cl}
(x_{1}+1,x_{2},\ldots,x_{n}, x_{n+1},\ldots)
\text{ if }x_{1}\in\{1,2,3,4\},\\[0.3cm]
(a,0,\ldots,x_{N(x)}+1,x_{N(x)+1},\ldots) \text{ if
}x_{1}\in\{0,5\}.
\end{array}
\right.
$$
Clearly, $\mathcal{R}(x)=\mathcal{R}_{T}(x)$ for almost all $x\in
X$. Hence $\mathcal{R}$ is orbit equivalent to $\mathcal{R}_{T}$
and, consequently, $\mathcal{R}_{\Omega}$ and $\mathcal{R}_{T}$ are
orbit equivalent. $T$ is an ergodic non-singular transformation,
since $M\simeq M(X,\mu,\mathcal{R}_{T})$ is a factor of type
$III_{0}$. From \cite{GH}, we have that the associated flow of
$\mathcal{R}_{T}$ is not AT and therefore, by \cite{CW}, it follows
that $T$ is not of product type.

\section{T has Krieger's Propperty A}
\bigskip

In this section, we show that the previously defined $T$ has
Krieger's property A. The cylinders sets in $X$ are denoted as
follows:
\[Z_{A}=\{x\in X; (x_{i})_{i\in \Gamma}\in A\}, \ \ \Gamma\subseteq \mathbb{N}^{*}, \ A\subseteq\prod_{i\in \Gamma}X_{i}.\]
We define random variable $a_{\mu,j}(x)$ on $(X,\mu)$ by setting,
\[a_{\mu,j}(x)=-\log \mu_{j}(x_{j}),\  x\in X, \ j\geqslant1.\]
First, we proof  a technical result.
\begin{lemma}\label{le}
Fix $\xi>0$, $I\in \mathbb{N}^{*}$, $E\subseteq\{v\in
\prod_{i=I+1}^{N}X_{i}\}$, $u\in\prod_{i=1}^{I}X_{i}$ and
$B\subseteq Z_{u}$ such that $\mu(B)>(1-\frac{\xi}{128})\mu(Z_{u})$
and $\mu(Z_{E})>\frac{\xi}{16}$. Let $E^{0}=\{v\in E:\mu(B\cap
Z_{v})>\frac{3}{4}\mu(Z_{u})\mu(Z_{v})$. Then
 $\mu(Z_{E^{0}})>\frac{1}{2}\mu(Z_{E})$.
\end{lemma}

\begin{proof}Suppose by contradiction that
$\mu(Z_{E^{0}})\leqslant\frac{1}{2}\mu(Z_{E})$. Then
\[\mu(Z_{E-E^{0}})\geqslant\frac{1}{2}\mu(Z_{E}).\]
If $v\in E-E^{0}$ then $\mu(Z_{v}\cap B)\leqslant
\frac{3}{4}\mu(Z_{u})\mu(Z_{v})$. It follows that
\[\mu(B\cap Z_{E-E^{0}})\leqslant\frac{3}{4}\mu(Z_{u})\mu(Z_{E-E^{0}}).\]
Therefore \[\mu(Z_{E-E^{0}}\cap
(Z_{u}-B))\geqslant\frac{1}{4}\mu(Z_{u})\mu(Z_{E-E^{0}}),\]which
implies
\[\mu(Z_{u}-B)\geqslant\mu(Z_{E-E^{0}}\cap
(Z_{u}-B))\geqslant\frac{1}{4}\mu(Z_{u})\mu(Z_{E-E^{0}})\geqslant\frac{\xi}{128}\mu(Z_{u}).\]
Hence
\[\mu(Z_{u})(1-\frac{\xi}{128})\geqslant \mu(B),\]
which is a contradiction.
\end{proof}

\noindent We can give now the proof of the main result.
\begin{theorem}\label{giord}
If $T$ is the non-singular transformation defined in the previous
section, then $T$ has Krieger's property A.
\end{theorem}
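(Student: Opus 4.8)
The plan is to verify Krieger's Property~A directly from the definition, using the product measure $\mu=\otimes\mu_n$ itself as the $\sigma$-finite measure $\nu$. I would begin by fixing concrete constants, say $\eta=\tfrac{1}{4}$ and some small $\delta>0$, and then, given an arbitrary set $A$ of positive measure, produce the required subset $B\subseteq A$ of positive measure witnessing the $\limsup$ condition. The natural first step is a density argument: by the Lebesgue density theorem for the product structure, $\mu$-almost every point of $A$ has cylinders $Z_u$ (depending only on the first $I$ coordinates) with $\mu(A\cap Z_u)>(1-\tfrac{\xi}{128})\mu(Z_u)$ for a suitable $\xi$ tied to $\eta$. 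So I would select such a cylinder $Z_u$ and take $B=A\cap Z_u$, which places us exactly in the hypotheses of Lemma~\ref{le}.

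The heart of the argument is to understand the Radon--Nikodym cocycle $\log\frac{d\mu\circ\gamma}{d\mu}$ for elements $\gamma\in[T]$ and to show that, for arbitrarily large target values $s$, a $\gamma$ exists carrying a large $\nu$-measure portion of $B$ back into $B$ while realizing a log-ratio near $\pm e^{s}$. The cocycle here is governed by the random variables $a_{\mu,j}(x)=-\log\mu_j(x_j)$: moving a coordinate $x_j$ from the atom of mass $\tfrac12$ (the symbol $0$) to an atom of mass $\tfrac{1}{2\cdot 5^j}$ (a symbol in $\{1,\dots,5^j\}$) changes the log-density by exactly $\log 5^j=j\log 5$, and the reverse move by $-j\log 5$. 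Because the equivalence relation $\mathcal{R}=\mathcal{R}_T$ permits swapping symbols at coordinate $j$ subject only to the mod-$2$ parity constraint $\sum_i(\pi_i(x_i)-\pi_i(y_i))\equiv 0$, I can engineer group elements $\gamma$ whose cocycle takes values in a dense enough set of the form $\{\pm j\log 5 + (\text{small corrections})\}$ to hit the window $(e^{s-\delta},e^{s+\delta})\cup(-e^{s+\delta},-e^{s-\delta})$ for a cofinal sequence of $s$.

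The key measure-theoretic step, and the reason Lemma~\ref{le} is proved first, is controlling how much of $B$ survives the transformation $\gamma$. Given a block of coordinates past level $I$, I would define $E\subseteq\prod_{i=I+1}^{N}X_i$ to be the configurations on which the desired symbol-swap (with a parity-correcting companion move) is admissible, arrange $\mu(Z_E)>\tfrac{\xi}{16}$, and apply Lemma~\ref{le} to extract $E^0$ with $\mu(Z_{E^0})>\tfrac12\mu(Z_E)$ on which $B$ occupies at least a $\tfrac34$-fraction of each cylinder $Z_v$. On such $v$, both $x$ and $\gamma x$ lie in $B$ for a set of $x$ of $\mu$-measure at least $(\tfrac34+\tfrac34-1)=\tfrac12$ of $\mu(Z_u)\mu(Z_v)$, so summing over $v\in E^0$ and over the realized value of $s$ yields a lower bound of the form $c\,\mu(B)$ with $c$ independent of $s$, giving the required $\limsup>\eta\mu(B)$.

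The main obstacle I anticipate is the bookkeeping of the parity constraint together with the Radon--Nikodym value simultaneously. A single coordinate swap at level $j$ changes the parity $\pi_j(x_j)$, so to stay inside $\mathcal{R}_T$ one must pair it with a compensating change at another coordinate; I must choose that companion coordinate so that it contributes negligibly to the cocycle (e.g.\ a swap among equal-mass symbols, or a pair whose contributions cancel) while keeping the admissible set $E$ large. Balancing ``large realized log-ratio'' against ``large parity-compatible admissible set'' against ``$B$ remains dense after applying $\gamma$'' is the delicate point, and it is precisely what Lemma~\ref{le} is designed to absorb; the remaining work is choosing the coordinate levels $j$ growing with $s$ so that $j\log 5$ tracks $e^s$ within the tolerance $\delta$.
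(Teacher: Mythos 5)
Your outer skeleton matches the paper's: take $\nu=\mu$, use a density argument to get $B=A\cap Z_u$ with $\mu(B)>(1-\tfrac{\xi}{128})\mu(Z_u)$, invoke Lemma~\ref{le}, respect the parity constraint with a companion move, and finish with an inclusion--exclusion overlap bound. But there is a genuine gap at the central step. You apply Lemma~\ref{le} only once, to the set $E$ of source configurations, and then assert that on each good $v$ ``both $x$ and $\gamma x$ lie in $B$'' on a set of measure $(\tfrac34+\tfrac34-1)\mu(Z_u)\mu(Z_v)$. That inclusion--exclusion requires $B$ to be $\tfrac34$-dense in \emph{both} the source cylinder $Z_v$ and its image cylinder $\gamma(Z_v)=Z_{v'}$, and your construction never establishes density at the image. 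This is not cosmetic: $\gamma$ distorts measure by a factor $5^{\pm j}$, so the image of a fat slab is a thin cylinder whose entire trace on $Z_u$ can be contained in the exceptional set $Z_u\setminus B$ (of measure up to $\tfrac{\xi}{128}\mu(Z_u)$, which dwarfs $\mu(Z_u)/5^{j}$ for large $j$); a single fixed target can be completely missed by $B$. The paper's proof is structured precisely to avoid this: Lemma~\ref{le} is applied \emph{twice} --- once to the tail sets $\Gamma_{\pm}$ to pin down one good source configuration $v^{-}$ (or $v^{+}$) as in (\ref{13}), and once to $E\subseteq\Psi\cap V^{0}$ to produce a large family $E^{0}$ of good targets as in (\ref{18}) --- and only then does the overlap bound (\ref{19}) for the maps $S_v:Z_u\cap Z_{v^-}\to Z_u\cap Z_v$ go through, with all $v\in E^0$ sharing a single value of $s$ because their cocycle relative to $v^-$ is pinned in $(e^{s-3},e^{s+3})$. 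Your plan needs an analogous second selection (an averaging over the family of candidate image cylinders) before the $\tfrac12$-overlap claim is legitimate.

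Two further points. First, your suggested parity repair ``a swap among equal-mass symbols'' cannot work: $\pi_j$ is constant (equal to $1$) on the equal-mass symbols $\{1,\dots,5^j\}$, so such a swap is parity-neutral and fixes nothing; the only parity-changing moves are $0\leftrightarrow$ nonzero toggles, which cost exactly $\pm j\log 5$. (Your alternative --- a pair of toggles with cancelling or combined contributions --- is viable, and is in spirit what the paper does with the coordinate-$(I+1)$ toggle, paying $(I+1)\log 5$ and absorbing it via the widened window $\Psi$ of (\ref{14}), condition (\ref{7trei}), and $\delta=3$, not a ``small $\delta$.'') Second, your mechanism for realizing large $s$ --- a single high-coordinate swap with $j\log 5\approx e^{s}$ --- departs entirely from the paper, which follows Krieger's scheme: conditions (\ref{1})--(\ref{4}) coming from the type $III$ structure, Helly's selection theorem, the limit law $\lambda$, the choice of $\rho$, and the fluctuation sets $\Gamma,\Gamma_{\pm}$, so that $e^{s}\approx b(i(k))$ arises from the aggregate behaviour of the first $i(k)$ coordinates. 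Your shortcut exploits only the unboundedness of the atom-mass ratios and, even granting that the $\limsup$ requires only a cofinal sequence of $s$, it does not become a proof until the two-sided density selection above is carried out; as written, the step where the argument could fail is exactly the one you left to ``bookkeeping.''
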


\begin{proof} The first part of the proof is similar to Krieger's proof,
\cite{K3}, for a product odometer. From \cite{K3}, there exist
sequences $b(i)>0$, $c(i)\in\mathbb{R}$, and $0<\beta < \frac{1}{2}$
with the following properties:
\begin{align}
&\ \ \ \ \ \ \ \ \ \ \ \ \ \ \ \ \ \ \sup\limits_{i\geqslant 1}b_{i}=\infty, \label{1}\\
&\mu\{x\in X :|c(i)+\sum\limits_{j=1}^{i}a_{\mu,j}(x)|\leqslant
b(i)\}\geqslant 1-2\beta,
\label{2}\\
&\mu\{x\in X : \ c(i)+\sum\limits_{j=1}^{i}a_{\mu,j}(x)\geqslant
b(i)\}\geqslant\beta, \label{3}\\
&\mu\{x\in X : \ c(i)+\sum\limits_{j=1}^{i}a_{\mu,j}(x)\leqslant
-b(i)\}\geqslant\beta. \label{4}
\end{align}
Let
\[\xi=\min\{\beta,1-2\beta\}.\]
Using (\ref{1}), we can choose a sequence $i(k)_{k\in \mathbb{N}}$
such that
\begin{equation}\label{5}
\lim\limits_{k\rightarrow \infty}b(i(k))=\infty
\end{equation}
with the property that, as $k\rightarrow\infty$, the random
variables
\[b(i(k))^{-1}(c(i(k))+\sum\limits_{j=1}^{i(k)}a_{\mu,j})\]
converge in distribution with a limit measure denoted by $\lambda$
(by Helly's Selection Theorem, [B], Theorem 29.3). From (\ref{2}),
we obtain
\begin{equation}\label{6}
\lambda([-1,1])\geqslant\xi.
\end{equation}
We choose $\rho$, $|\rho|\leqslant 1$, such that
\begin{equation}\label{7}
\lambda\left(\rho-\frac{1}{2},\rho+\frac{1}{2}\right)\geqslant
\frac{\xi}{3}.
\end{equation}
Let $A\subseteq X$. There exists $I\in\mathbb{N}$ and
$u\in\prod_{i=1}^{I}X_{i}$ such that
\[\mu(B)>(1-\frac{\xi}{128})\mu(Z_{u}),\]
where $B=A\cap Z_{u}$. The random variables
\begin{equation}\label{7bis}
b(i(k))^{-1}(c(i(k))+\sum\limits_{j=I+1}^{i(k)}a_{p,j})
\end{equation}
also converge in distribution and the limit measure is also
$\lambda$.

\noindent Let $M>1$ such that
\begin{equation}\label{7trei}
e^{M-2}>(I+1)\log 5.
\end{equation}
From (\ref{5}) and (\ref{7}), we can choose $k\in \mathbb{N}$ such
that
\begin{align}
b(i(k))>e^{M+1}, \label{8}\\
b(i(k))>-4\sum\limits_{j=1}^{I}\min_{x\in X_{j}}
\mu_{j}(x)\label{9},
\end{align}
and setting
\[\Gamma=\{v\in\prod_{i=I+1}^{i(k)}X_{i} :(\rho-\frac{1}{2})b(i(k))\leqslant c(i(k))-\sum\limits_{j=I+1}^{i(k)}\log \mu_{j}(v_{j})\leqslant
(\rho+\frac{1}{2})b(i(k))\},\] we have
\begin{equation}\label{10}
\mu(Z_{\Gamma})>\frac{\xi}{4}.
\end{equation}
Let
\begin{align*}
\Gamma_{-}&=\{v\in\prod_{i=I+1}^{i(k)}X_{i}
:c(i(k))-\sum\limits_{j=I+1}^{i(k)}\log \mu_{j}(v_{j})\leqslant
-\frac{3}{4}b(i(k))\},\\
\Gamma_{+}&=\{v\in\prod_{i=I+1}^{i(k)}X_{i}
:c(i(k))-\sum\limits_{j=I+1}^{i(k)}\log \mu_{j}(v_{j})\geqslant
\frac{3}{4}b(i(k))\}.
\end{align*}
 From (\ref{3}), (\ref{4}) and (\ref{9}) we
have
\begin{equation}\label{11}
\mu(Z_{\Gamma_{-}})\geqslant\xi \ \ \ \ \ \ \
\mu(Z_{\Gamma_{+}})\geqslant\xi.
\end{equation}
Using Lemma \ref{le}, we find
\begin{equation}\label{12}
v^{-}\in\Gamma_{-} \text{ and } v^{+}\in\Gamma_{+},
\end{equation}
such that
\begin{equation}\label{13}
\mu(B\cap Z_{v^{-}})>\frac{3}{4}\mu(Z_{u})\mu(Z_{v^{-}}),
\end{equation}
\begin{equation*}
\mu(B\cap Z_{v^{+}})>\frac{3}{4}\mu(Z_{u})\mu(Z_{v^{+}}).
\end{equation*}For the
remaining part of the proof we argue in a different manner than
Krieger did in the case of a product odometer.

Assume that $\rho\geqslant0$. We define:
\[V^{0}=\{v\in\prod_{j=I+1}^{i(k)}X_{i}:\sum\limits_{j=I+1}^{i(k)}
\pi_{j}(v_{j})-\pi_{j}(v^{-}_{j})=0\text{ (mod 2)}\},\]
\[V^{1}=\{v\in\prod_{j=I+1}^{i(k)}X_{i}:\sum\limits_{j=I+1}^{i(k)}
\pi_{j}(v_{j})-\pi_{j}(v^{-}_{j})=1\text{ (mod 2)}\},\] and
\begin{multline}\label{14}
\Psi=\left\{ v\in\prod_{i=I+1}^{i(k)}X_{i}
:(\rho-\frac{1}{2})b(i(k))-(I+1)\log 5\leqslant\right.\\
\left.\leqslant c(i(k))-\sum\limits_{j=I+1}^{i(k)}\log
\mu_{j}(v_{j})\leqslant (\rho+\frac{1}{2})b(i(k))+(I+1)\log 5\
\right\}.
\end{multline}
Let
\begin{equation}\label{15}
s=\log|c(i(k))-\sum\limits_{j=I+1}^{i(k)}\log \mu_{j}(v_{j}^{-})|.
\end{equation} Since $v^{-}\in\Gamma_{-}$, we have:
\begin{equation}\label{16}
e^{s}\geqslant\frac{3}{4}b(i(k))
\end{equation}
and from (\ref{8})
\begin{equation}
s>M\label{17}.
\end{equation}

\noindent We will show that there exists a set $E\subseteq\Psi\cap
V^{0}$ such that $\mu(Z_{E})>\frac{\xi}{16}$.

\noindent From (\ref{10}), we have that
$\mu(Z_{\Gamma})>\frac{\xi}{4}$. Note that there are two
possibilities:

\noindent\textbf{Case  I: }  If $\mu(Z_{\Gamma\cap
V^{0}})>\frac{\xi}{16}$, we define $E=\Gamma\cap V^{0}\subseteq
\psi\cap V^{0}$.

\noindent\textbf{Case II.} If $\mu(Z_{\Gamma\cap
V^{0}})\geqslant\frac{\xi}{16}$ we have by (\ref{10}) that
$\mu(Z_{\Gamma\cap V^{1}})>\frac{3\xi}{16}$.

\noindent Without loss of generality, we assume that $v^{-}$
satisfies $\sum\limits_{i=I+1}^{i(k)}\pi_{i}(v^{-}_{i})=0$ (mod 2)
(if $\sum\limits_{i=I+1}^{i(k)}\pi_{i}(v^{-}_{i})=1$ (mod 2) we
proceed in a similar way).

We consider the following sets:
\begin{align*}
&A_{0}^{0}=\{v\in \Gamma; v_{I+1}=0,
\sum_{i=I+2}^{i(k)}\pi_{i}(v_{i})=0\text{ (mod 2)}\},\\
&A_{1}^{0}=\{v\in \Gamma; \pi_{I+1}(v_{I+1})=1,
\sum_{i=I+2}^{i(k)}\pi_{i}(v_{i})=0\text{ (mod 2)}\},\\
&A_{0}^{1}=\{v\in \Gamma; v_{I+1}=0,
\sum_{i=I+2}^{i(k)}\pi_{i}(v_{i})=1\text{ (mod 2)}\},\\
&A_{1}^{1}=\{v\in \Gamma; \pi_{I+1}(v_{I+1})=1,
\sum_{i=I+2}^{i(k)}\pi_{i}(v_{i})=1\text{ (mod 2)}\}.
\end{align*}and
\begin{align*}
&B_{1}^{1}=\{x\in \prod_{i=I+1}^{i(k)}X_{i};\exists v\in
A_{0}^{1}\text{ with } v_{i}=x_{i}, i\geqslant I+2,
\text{ and }x_{I+1}\in\{1,\ldots 5^{I}\}\}\\
&B_{0}^{0}=\{x\in \prod_{i=I+1}^{i(k)}X_{i};\exists v\in
A_{1}^{0}\text{ with } v_{i}=x_{i}, \text{ for }i\geqslant I+2,
\text{ and }x_{I+1}=0\}
\end{align*}
We immediately observe that
\[\Gamma\cap V^{0}=A_{0}^{0}\cup
A_{1}^{1},\ \ \Gamma\cap V^{1}=A_{0}^{1}\cup A_{1}^{0},\]
\[\mu(Z_{B^{0}_{0}})\geqslant\mu(Z_{A^{0}_{1}})\text{ and
}\mu(Z_{B^{1}_{1}})\geqslant\mu(Z_{A^{1}_{0}}).\] As
$\mu(Z_{V^{1}\cap\Gamma})>\frac{3\xi}{16}$ and $\mu(B_{0}^{0}\cup
B_{1}^{1})\geqslant \mu(A_{0}^{1}\cup A_{1}^{0})$, we obtain
\begin{equation}\label{18bis}
\mu(Z_{B_{0}^{0}\cup B_{1}^{1}})>\frac{3\xi}{16}>\frac{\xi}{16}.
\end{equation}
\noindent We claim that
\begin{equation}\label{18trei}
B_{0}^{0}\cup B_{1}^{1}\subseteq \Psi.
\end{equation}
First, we show that $B_{0}^{0}\subseteq\Psi$. Let $x\in B_{1}^{1}$.
From the way $B_{1}^{1}$ has been defined, we have that
$x_{I+1}\in\{1,2,\ldots 5^{I}\}$ and there exists $v\in A_{0}^{1}$
with $v_{i}=x_{i}$, for $i\geqslant I+2$. As $v\in
A^{1}_{0}\subseteq \Gamma$ and
\begin{align*}
\sum_{i=I+1}^{i(k)}(\log \mu_{i}(x_{i})-\log \mu_{i}(v_{i}))&=\log
\mu_{I+1}(x_{I+1})-\log \mu_{I+1}(v_{I+1})\\&=-\log
\frac{1}{2}+\log\frac{1}{2\cdot 5^{I+1}}\\
&=-(I+1)\log 5,
\end{align*}
it follows that $x\in \Psi$. Since
$\sum\limits_{j=I+1}^{i(k)}\pi_{j}(x_{j})-\pi_{j}(v_{j}^{-})=0$ (mod
2), it follows that $x\in V^{0}$. Similarly, if $x\in B_{0}^{0}$ we
have $x\in\Psi\cap V^{0}$.

We define $E=B_{0}^{0}\cup B_{0}^{0}$. From (\ref{18bis}) and
(\ref{18trei}), it results that $E\subseteq \Psi\cap V^{0}$ and
$\mu(Z_{E})\geqslant\frac{\xi}{16}$.

Hence, in both cases, we showed that we can find $E\subseteq
\Psi\cap V^{0}$ such that $\mu(Z_{E})>\frac{\xi}{16}$.

From Lemma \ref{le}, we obtain that there exists $E^{0}\subseteq E$
such that
\begin{equation*}
\mu(Z_{E^{0}})>\frac{\xi}{32},
\end{equation*}
\begin{equation}\label{18} \mu(B\cap
Z_{v})>\frac{3}{4}\mu(Z_{u})\mu(Z_{v}) \text{ for all }v\in E^{0}.
\end{equation}

Let $v\in E^{0}$. Since
$\sum\limits_{i=I+1}^{i(k)}\pi_{i}(v_{i})-\pi_{i}(v^{-}_{i})=0$ (mod
2), there exists $S_{v}\in [T]$ such that
\begin{align*}
S_{v}:Z_{u}\cap Z_{v^{-}}\rightarrow Z_{u}\cap Z_{v}\\
(S_{v}x)_{j}=x_{j},\ \ \ \ j>i(k).
\end{align*}
From (\ref{13}) and (\ref{18}) we have
\begin{equation}\label{19}
\mu(B\cap Z_{v}\cap S_{v}(B\cap
Z_{v^{-}}))>\frac{1}{2}\mu(Z_{u})\mu(Z_{v}),\text{ for all }v\in
E^{0}
\end{equation}
\noindent We want to prove that
\begin{equation}\label{20}
B\cap Z_{v}\cap S_{v}(B\cap Z_{v^{-}})\subseteq
K_{\mu,T}(B,s,3)\text{ for all }v\in E^{0}
\end{equation}
\noindent First we show that for $v\in E^{0}$,
\begin{equation}\label{21}
|\log(\sum_{I<j\leqslant i(k)}(\log \mu_{j}(v_{j})-\log
\mu_{j}(v_{j}^{-})))-s|<3.
\end{equation}
In order to prove (\ref{21}), from (\ref{7trei}), (\ref{14}),
(\ref{15}), (\ref{16}) and (\ref{17}), we have
\begin{align*}
\sum\limits_{j=I+1}^{i(k)}\log\mu_{j}(v_{j}^{-})-\log \mu_{j}(v_{j})
=c(i(k))-\sum\limits_{j=I+1}^{i(k)}\log
\mu_{j}(v_{j})+e^{s}\\
\leqslant (\rho+\frac{1}{2})b(i(k))+e^{s}+ (I+1)\log 5\leqslant
e^{s+2}+(I+1)\log 5 <e^{s+3}.
\end{align*}
From (\ref{7trei}), (\ref{14}), (\ref{15}), (\ref{16}), (\ref{17}),
and the fact that $\rho\geqslant 0$, we have
\begin{align*}
\sum\limits_{j=I+1}^{i(k)}(\log \mu_{j}(v_{j}^{-})-\log
\mu_{j}(v_{j})) =c(i(k))-\sum\limits_{j=I+1}^{i(k)}\log
\mu_{j}(v_{j})+e^{s}\ \ \ \ \ \ \ \ \ \ \ \\
\geqslant (\rho-\frac{1}{2})b(i(k))+e^{s}-(I+1)\log 5\geqslant
e^{s}-\frac{2}{3}e^{s}- (I+1)\log 5\\ >\frac{1}{3}e^{s}-(I+1)\log
5>e^{s-3}
\end{align*}
So, from (\ref{19}) and (\ref{20}) we conclude that:
\[\mu(K_{\mu,T}(B,s,3))>\frac{1}{2}\mu(Z_{u})\frac{\xi}{32}\geqslant\frac{\xi}{64}\mu(B)\]
If $\rho\leqslant 0$ we proceed in a similar way with $v^{+}$
instead of $v^{-}$. Hence $T$ has Krieger's property A.
\end{proof}
Therefore, we have
\begin{corollary}
There exist ergodic non-singular transformations of non-product type
which satisfy Krieger's property A.
\end{corollary}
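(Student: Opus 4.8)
The plan is to exhibit a single transformation realizing both demands rather than to build anything new: I would simply take the transformation $T$ constructed in Section~3 and check that it carries the three features packaged into the corollary, namely that it is ergodic and non-singular, that it is not of product type, and that it satisfies Krieger's property A.

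For the first feature I would argue directly from the construction: $T$ is non-singular by its explicit definition on $(X,\mu)$, and it is ergodic because the associated Krieger factor $M(X,\mu,\mathcal{R}_{T}) \simeq M^{\alpha}$ is a factor of type $III_{0}$, which forces $\mathcal{R}_{T}$, and hence $T$, to be ergodic. For the second feature I would invoke the Giordano--Handelman analysis \cite{GH}, according to which the flow associated with $\mathcal{R}_{T}$ is not approximately transitive; the Connes--Woods theorem \cite{CW} then gives that a type $III_{0}$ transformation is of product type exactly when its associated flow is aperiodic and AT, so the failure of AT shows at once that $T$ is not of product type. The third feature is furnished verbatim by Theorem~\ref{giord}.

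With these three verifications in place the corollary is immediate, since $T$ is then an ergodic non-singular transformation of non-product type satisfying property A. I expect no genuine obstacle at this final stage, because the substantive difficulty has already been absorbed into Theorem~\ref{giord}: there the parity constraint defining $\mathcal{R}$, as opposed to the unrestricted tail equivalence underlying a product odometer, blocks a direct appeal to Krieger's original estimate and demands the combinatorial splitting into the sets $A^{i}_{j}$, $B^{i}_{j}$ together with the passage between $V^{0}$ and $V^{1}$ used to secure a subset $E \subseteq \Psi \cap V^{0}$ of controlled measure. Assembling that theorem with the non-product-type conclusion drawn from \cite{GH} and \cite{CW} completes the existence statement.
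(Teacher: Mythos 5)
Your proposal is correct and matches the paper's own (implicit) argument exactly: the corollary is stated with ``Therefore, we have,'' meaning it follows by combining Theorem~3.2 (property A for $T$) with the Section~3 facts that $T$ is ergodic, of type $III_{0}$, and not of product type via \cite{GH} and \cite{CW}. Your three verifications are precisely the ingredients the paper assembles, so there is nothing to add.
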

\proof[Acknowledgements] This paper is part of the author's Ph.D.
thesis at the University of Ottawa. The author is grateful to his
Ph.D. supervisor, Professor Thierry Giordano for his guidance,
encouragements and support. The author is also grateful to dr.
Dumitru Trucu from University of Dundee, for useful suggestion in
writing this paper.

\end{document}